\newtheorem{theorem}{Theorem}
\newtheorem{corollary}{Corollary}
\newtheorem{lemma}{Lemma}
\newtheorem{remark}[theorem]{Remark}
\newenvironment{proof}[1][Proof]{\textbf{#1.} }{\  \rule{0.5em}{0.5em}}
\begin{document}

\title{Projections from surfaces of revolution in the Euclidean plane}
\author{C. Charitos and P. Dospra}
\maketitle

\begin{abstract}
For a specific class of surfaces of revolution $S$, the existence of a smooth
map $\Phi$ from a neighbourhood $U$ of $S$ to the Euclidean plane $E^{2}$
preserving distances infinitesimally along the meridians and the parallels of
$S$ and sending the meridional arcs of $U\cap S$ to straight lines of $E^{2},$
is proven.\smallskip

\textit{2010 Mathematics Subject Classification }53A05, 34A05

\end{abstract}

\section{Introduction}

In \cite{[Euler]} (see \cite{[Heine]} for the translation of \cite{[Euler]} in
English) Euler proved that there does not exist a perfect map from the sphere
$S^{2}$ or from a part of $S^{2}$ to the Euclidean plane $E^{2}.$ Recall that
a smooth map $f$ from $S^{2}$ (or from a part of $S^{2})$ to $E^{2}$ is called
\textit{perfect} if for each $p\in S^{2}$\ there is a neighborhood $U(p)$\ of
$p$ in $S^{2}$ such that the restriction of $f$\ on $U(p)$\ preserves
distances infinitesimally along the meridians and the parallels of $S^{2}$ and
$f$ preserves also angles between meridians and parallels \cite{[CP]}. In
modern geometric language a perfect map is a local isometry from $S^{2}$ to
$E^{2}$ and thus Euler's theorem results as a corollary of the Gauss Egregium
Theorem which was proven many years later. However, Euler's method of proof is
very fruitful and can be applied in similar problems, see for instance
Proposition 5 in \cite{[CP]}. Very briefly, Euler's basic idea for the
non-existence of a perfect map from $S^{2}$ to $E^{2},$ is to translate
geometrical conditions to a system of differential equations and prove that
this system does not have a solution. Using Euler's method, the non-existence
of a smooth map from a neighbourhood $U$ of $S^{2}$ to $E^{2}$ which preserves
distances infinitesimally along the meridians and the parallels of $S^{2}$ and
which sends the meridional arcs of $U\cap S^{2}$ to straight lines of $E^{2},$
can also be proven \cite{[CP]}.

The origin of all these problems lies in the ancient problem of cartography,
that is, the problem of constructing geographical maps from $S^{2}$ (or from a
subset of $S^{2})$ to $E^{2}$ which satisfy certain specific requirements.
This problem can also be considered as part of a more general subject
which explores the existence of coordinate transformations that preserves some
geometrical properties from one coordinate system to another.
Several prominent mathematicians have been studying this problem from
antiquity to our days and in the course of this study, $S^{2}$ was replaced
gradually by surfaces of revolution or by surfaces in $E^{3}$ generally, see
\cite{[Papadop]} for an excellent historical recursion on this subject.

The goal of this work is to determine the class of surfaces of revolution $S$
for which there exists a smooth map $\Phi$ from an open neighbourhood $U$ of
$S$ to $E^{2}$ preserving distances infinitesimally along the meridians and
the parallels of $S$ and sending meridional arcs of $U\cap S$ to straight
lines of $E^{2}.$ Furthermore the map
$\Phi$ is computed explicitly. For the computation of $\Phi$ we follow Euler's
ideas, that is, we convert geometrical conditions to differential equations
whose solutions allow us to find $\Phi.$

As a corollary of the above result we deduce that if $p$ is a point of $S$ and
if the Gaussian curvature at $p$ is positive, then a map $\Phi$ as above does
not exist in a neighbourhood $U$ of $p.$ We also deduce that if $S_{0}$ is an
abstract surface of constant negative curvature, such maps $\Phi$ do not exist
from an open subset $U$ of $S_{0}$ to $E^{2}.$

\section{Statement of Results}

Let $S$ be  a surface of revolution  in  $E^{3}$.  
In the following we assume that all maps are of class $C^s$, $s\geq2$.   
 We consider a
parametrization of $S$ given by
\begin{equation}
r(t,u)=(f(u)\cos t,f(u)\sin t,g(u)), \tag{r}\label{r}%
\end{equation}
where $f(u)>0,$ $a<u<b,$ $t\in \lbrack0,2\pi].$ In what follows we will always
assume that the curve $\gamma(u)=(f(u),g(u))$ is parametrized by arc-length
i.e. $(f^{\prime})^{2}(u)+(g^{\prime})^{2}(u)=1.$ Therefore the Riemannian
metric on $S$ takes the form
\[
ds^{2}=du^{2}+f^{2}(u)dt^{2}.
\]
For $t$ fixed, the $u$-curves $r(t,\cdot)$ are called \textit{meridians} of
$S$ and are geodesics for the metric $ds,$ while for $u$ fixed, the $t$-curves
$r(\cdot,u)$ are called \textit{parallels} and are not geodesics in general.

At each point $p\in S$ we may assign a unique pair of coordinates
$(t,u)\in(a,b)\times \lbrack0,2\pi)$ since $p=r(t,u).$ Thus, $p$ will be
identified with this pair $(t,u).$

\begin{theorem}
\label{main theorem} Assume that $f^{\prime}(u)\neq0$ and
$f^{\prime \prime}(u)\neq 0$ for each $u\in(a,b)$. Let
$U$ be an open connected
subset of $S$. Then, there
exists a map $\Phi:U\rightarrow
E^{2}$ having the properties \\
$(C1):$ $\Phi$ sends the meridional arcs of $S\cap U$ to straight lines
of $E^{2};$\\
$(C2):$ $\Phi$ preserves distances infinitesimally along the meridians
and the parallels of $S;$\\
if and only if $f^{2}=cu^{2}+du+k$, where $c,d,k$ are constants with $k>0$ and $c>0$.
Furthermore, assuming that $\Phi(t,u)=(x(t,u),y(t,u))$, we have that
\begin{align*}
x(t,u)  &  =u\cos(b(t))+\int \sqrt{k}\cos(\theta_{0}-b(t))dt,\\
y(t,u)  &  =-u\sin(b(t))+\int \sqrt{k}\sin(\theta_{0}-b(t))dt,
\end{align*}%
where
\[
b(t)=-\sqrt{c}t+c_{0}.
\]
\end{theorem}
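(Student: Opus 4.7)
The plan is to recast conditions (C1) and (C2) as a system of PDEs for $\Phi(t,u)=(x(t,u),y(t,u))$, solve the system, and thereby produce both the necessary form of $f^2$ and the explicit formula for $\Phi$.

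First I would exploit (C1) together with the meridional part of (C2). The identity $x_u^2+y_u^2=1$ says that for each fixed $t$ the curve $u\mapsto(x(t,u),y(t,u))$ is unit-speed, and (C1) forces this curve to be a straight line in $E^2$. Taken together, these two facts give the ansatz
\[
x(t,u)=u\cos b(t)+A(t),\qquad y(t,u)=-u\sin b(t)+B(t),
\]
for some smooth functions $b,A,B$ of $t$ alone.

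The remaining condition $x_t^2+y_t^2=f^2(u)$ then becomes the crux. A direct differentiation expresses the left-hand side as
\[
(b'(t))^2u^2-2b'(t)\bigl[A'(t)\sin b(t)+B'(t)\cos b(t)\bigr]u+(A'(t))^2+(B'(t))^2,
\]
a polynomial of degree $2$ in $u$ whose coefficients depend only on $t$. Since the right-hand side depends only on $u$, each coefficient must be a constant; calling them $c$, $d$, $k$ respectively yields $f^2(u)=cu^2+du+k$. The hypothesis $f'\neq 0$ excludes $c=0$ (which would make $f^2$ constant), and $f''\neq 0$ excludes $k=0$ (which would give $f^2=cu^2$ and hence $f$ linear). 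Thus $c,k>0$, and $(b')^2=c$ forces $b(t)=\pm\sqrt{c}\,t+c_0$; the minus sign matches the statement. This settles the ``only if'' direction.

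For the converse, given $f^2=cu^2+du+k$ with $c,k>0$, I would parametrize $(A'(t),B'(t))=\sqrt{k}(\cos\phi(t),\sin\phi(t))$, so that the constant-term equation $(A')^2+(B')^2=k$ is automatic. The middle-term equation then becomes $2\sqrt{ck}\sin(\phi(t)+b(t))=d$, which is satisfied by $\phi(t)+b(t)\equiv\theta_0$ with $\sin\theta_0=d/(2\sqrt{ck})$. Integrating the resulting $A'=\sqrt{k}\cos(\theta_0-b(t))$ and $B'=\sqrt{k}\sin(\theta_0-b(t))$ recovers the stated integral formulas, and substituting them into the ansatz verifies (C1) and (C2). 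The main subtlety I anticipate is the solvability of $\sin\theta_0=d/(2\sqrt{ck})$, which silently requires $d^2\leq 4ck$; I would need to show that this discriminant condition follows from the arc-length constraint $(f')^2+(g')^2=1$ on $\gamma$, or else recognize it as an implicit compatibility built into the ``if'' direction.
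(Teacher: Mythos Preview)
Your approach is correct and noticeably more direct than the paper's. Both proofs start the same way: from (C1) together with the meridional half of (C2) one extracts the linear-in-$u$ ansatz $x=ug_1(t)+g_2(t)$, $y=uh_1(t)+h_2(t)$ (your $(\cos b,-\sin b,A,B)$ are the paper's $(g_1,h_1,g_2,h_2)$). From there, however, the paper takes a detour: it introduces an auxiliary angle $\omega$ with $(x_t,y_t)=f(\cos\omega,\sin\omega)$, uses Lemma~1 to obtain $f''=(\omega_u)^2 f$, deduces that $\omega$ splits as $a(u)+b(t)$, extracts from the mixed partials the ODE $2f'a'+fa''=0$, and then invokes Lemma~2 to solve that ODE and arrive at $f^2=cu^2+du+k$. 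Only afterward does the paper expand $(ug_1'+g_2')^2+(uh_1'+h_2')^2=f^2$ and match coefficients, and by then this is used merely to pin down $g_2',h_2'$. You instead go straight to that polynomial identity and read off the form of $f^2$ together with all the coefficient constraints at once, bypassing $\omega$, the ODE, and both lemmas. What the paper's longer route buys is the explicit angle $a(u)$ and the relation $f''/f=(a')^2>0$, which delivers the discriminant sign $\Delta<0$ as a byproduct.

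Two small remarks. Your parentheticals justifying $c>0$ and $k>0$ are correct but compressed: spell out that $c=(b')^2=0$ forces $b'=0$, which also kills the linear coefficient (so $d=0$ and $f^2$ is constant), and that $k=(A')^2+(B')^2=0$ forces $A'=B'=0$, hence again $d=0$. On the discriminant issue you flag for the converse: the arc-length constraint alone does not yield $d^2\le 4ck$. In the forward direction it drops out of the solvability of $\sin(\phi+b)=d/(2\sqrt{ck})$, and in the paper it comes from $f''/f=(a')^2>0$; you can recover it in your framework by computing $f''=-\Delta/(4f^3)$ and using $f''\neq 0$, $f>0$ together with the forward-direction inequality. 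The theorem as stated omits this constraint, so your caution is well placed.
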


\begin{figure}[h] 
\centering{\includegraphics[scale = 0.3]{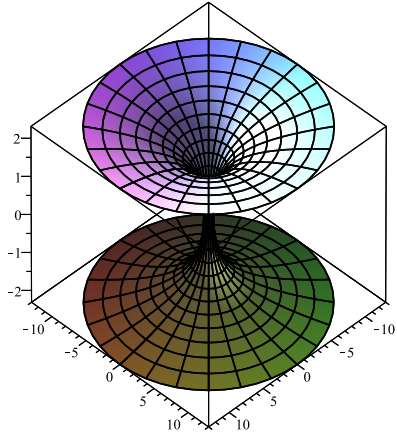}
\caption{\small Surface with $f^{2}=u^{2}+1$}}
\end{figure}

In Figure 1, a surface of revolution $S$ is drawn which satisfies all
hypothesis of Theorem \ref{main theorem}. In this example, taking $f^{2}
=u^{2}+1$ it results that $g(u)=\ln(\sqrt{u^{2}+1}+u),$ $u>0.$ The picture
confirms that the Gaussian curvature of each point of $S$ is negative.

\begin{corollary}
(1) If $p\in S$ and the Gaussian curvature at $p$ is positive then for each
neighborhood $U$ of $p$ in $S$ there does not exist a map $\Phi:U\rightarrow
E^{2}$ satisfying the conditions $(C1)$ and $(C2).$

(2) If $S_{0}$ is a Riemannian surface of constant negative curvature then for
each open neighbourhood $U\subset S_{0}$ there does not exist a map
$\Phi:U\rightarrow E^{2}$ satisfying the conditions $(C1)$ and $(C2).$
\end{corollary}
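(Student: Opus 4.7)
Both parts are handled by contradiction, feeding a hypothetical $\Phi$ into the main theorem to fix the form of $f$ and then reading off the Gaussian curvature. The single computational tool is the identity
\[
K=-\frac{f''}{f}=\frac{d^{2}-4ck}{4f^{4}},
\]
which follows in one line from $2ff'=2cu+d$ and $(f')^{2}+ff''=c$ once $f^{2}=cu^{2}+du+k$ is imposed; thus the sign of $K$ coincides with the sign of $d^{2}-4ck$.

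For part $(1)$, assume $\Phi$ exists on a neighbourhood $U$ of $p$ with $K(p)>0$. Because $K(p)>0$ forces $f''(u_{p})<0$, and $f'$ cannot vanish identically near $u_{p}$ (else $K\equiv 0$), I can shrink $U$ to satisfy the hypotheses $f'\neq 0$, $f''\neq 0$ of the main theorem. The theorem then gives $f^{2}=cu^{2}+du+k$ with $c,k>0$ on $U$. The decisive step is to extract the bound $d^{2}\leq 4ck$ from the explicit formulas: substituting $x(t,u)$, $y(t,u)$ into the parallel-preservation equation $|\Phi_{t}|^{2}=f^{2}(u)$ and matching the coefficient of $u$ yields $d=2\sqrt{ck}\sin\theta_{0}$, so $d^{2}\leq 4ck$, strictly because $f''\neq 0$. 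Substituting into the curvature identity gives $K<0$ throughout $U$, contradicting $K(p)>0$.

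For part $(2)$, a hypothetical $\Phi:U\to E^{2}$ puts the metric on $U\subset S_{0}$ into the form $du^{2}+f^{2}(u)\,dt^{2}$, and the constant curvature condition $K_{0}=-\mu^{2}<0$ reduces $K=-f''/f=K_{0}$ to the linear equation $f''=\mu^{2}f$, whose general solution is $f(u)=Ae^{\mu u}+Be^{-\mu u}$. After shrinking $U$ to avoid the at most one zero of $f'$, the main theorem would require the identity $(Ae^{\mu u}+Be^{-\mu u})^{2}=cu^{2}+du+k$ on an open interval; by real analyticity this equality persists on all of $\mathbb{R}$, and a comparison of exponential with polynomial growth then forces $A=B=0$, contradicting $f>0$.

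The step I expect to carry the real content is the inequality $d^{2}\leq 4ck$ in $(1)$: the main theorem records only the polynomial form of $f^{2}$, so this Cauchy--Schwarz-type bound must be read off directly from the explicit formula for $\Phi$. Once it is in hand, the remainder of each part reduces to an algebraic comparison, and $(2)$ reduces to the elementary observation that a nonzero sum of real exponentials cannot coincide with a polynomial on an open interval.
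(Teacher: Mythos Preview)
Your argument is correct, but for part~(1) you work considerably harder than the paper does. The paper observes that the existence of $\Phi$ already forces $f''/f=(\partial\omega/\partial u)^{2}\ge 0$ (this is the content of Lemma~1, restated in Lemma~2 as $f''/f=(a')^{2}$), so $K=-f''/f\le 0$ immediately; with the standing hypothesis $f''\neq 0$ this gives $K<0$ and the contradiction. You instead push all the way through Theorem~1 to the explicit form of $\Phi$, and then recover the discriminant inequality $d^{2}<4ck$ by substituting the formulas back into $|\Phi_{t}|^{2}=f^{2}$ to read off $d=2\sqrt{ck}\sin\theta_{0}$. That is a valid and self-contained derivation (and it makes the constraint $\Delta<0$, which in the paper is only implicit in the $\arctan$ formula for $a(u)$, completely explicit), but it duplicates work already embedded in the proof of Lemma~1. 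One small wrinkle: if $f'(u_{p})=0$ you cannot shrink $U$ around $p$ itself; you should say that $K>0$ on an open set forces $f'\not\equiv 0$ there, and then move to a nearby point where $f'\neq 0$ before applying the theorem.

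For part~(2) your route is essentially the paper's, only more explicit: the paper appeals to the known classification of constant-negative-curvature surfaces of revolution (citing Gray) and notes that none of them has $f^{2}$ quadratic, while you solve $f''=\mu^{2}f$ directly and compare growth rates. Your phrasing ``a hypothetical $\Phi$ puts the metric into the form $du^{2}+f^{2}(u)\,dt^{2}$'' is slightly off: the coordinates $(t,u)$ come from the (local) surface-of-revolution structure on $S_{0}$, not from $\Phi$; the point is that \emph{any} such structure forces $f''=\mu^{2}f$, and then your exponential-versus-polynomial comparison rules out $f^{2}=cu^{2}+du+k$.
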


Condition $(C1)$ is a natural requirement since meridians are geodesics of $S$
and thus it is required to be sent to geodesics of $E^{2}$ via $\Phi.$

Condition $(C2)$ appears in Euler's writings and means that the elementary
length between two points $p,$ $q$ on a meridian (resp. two points $p,r$ on a
parallel) of $S$ is equal to the elementary length of points $P=\Phi(p),$
$Q=\Phi(q)$ (resp. of points $P,$ $R=\Phi(r)).$ In other words, the `elements'
$pq$ and $pr$ are equal to the `elements' $PQ$ and $PR$ respectively
(following the terminology of \cite{[Euler]}). In order to express $(C2)$
rigorously, let $p=(t,u),$ $q=(t,u+du),$ $r=(t+dt,u),$ $P=\Phi(t,u),$
$Q=\Phi(t,u+du),$ $R=\Phi(t+du,u).$ If we denote by $|p_{1}-p_{2}|$ the
distance between the points $p_{1},$ $p_{2}\in S$ and by $||P_{1}-P_{2}||$ the
Euclidean distance between the points $P_{1},$ $P_{2}\in E^{2},$ then
condition $(C2)$ means that:%
\begin{equation}
\underset{du\rightarrow0}{\lim}\frac{|q-p|}{|du|}=\underset{du\rightarrow
0}{\lim}\frac{||P-Q||}{|du|} \tag{i}\label{i}%
\end{equation}

\begin{equation}
\underset{dt\rightarrow0}{\lim}\frac{|r-p|}{|du|}=\underset{dt\rightarrow
0}{\lim}\frac{||R-Q||}{|du|}. \tag{ii}\label{ii}%
\end{equation}

Using the coordinate functions $x(t,u)$ and $y(t,u)$ the equalities (\ref{i})
and (\ref{ii}) take respectively the following form:%

\begin{equation}
\sqrt{(\frac{\partial x}{\partial u})^{2}+(\frac{\partial y}{\partial u})^{2}%
}=1, \tag{iii}\label{iii}%
\end{equation}

\begin{equation}
\text{\qquad}\sqrt{(\frac{\partial x}{\partial t})^{2}+(\frac{\partial
y}{\partial t})^{2}}=f(u). \tag{iv}\label{iv}%
\end{equation}
Indeed, relations (\ref{iii}) and (\ref{iv}) correspond to the relations (I)
and (II) of (\cite{[Heine]}, p. 5). In \cite{[CP]}, these relations are
reproved using a modern mathematical language and they are labelled as
relations (6) and (7) respectively. In the present work, relations (\ref{iii})
and (\ref{iv}) are obtained by replacing $\cos u$ in the parametrization
$(\cos u\cos t,\cos u\sin t,\sin u)$ of $S^{2}$ by the function $f(u)$ in the
parametrization $(f(u)\cos t,f(u)\sin t,g(u))$ of $S$ and then repeating the
same steps. As a result, using Euler's method, condition $(C2)$ is translated
in a system of differential equations consisting of the relations (\ref{iii})
and (\ref{iv}).

Combining $(C1)$ and $(C2)$ we deduce that $\Phi$ restricted to a meridian of
$S$ is an isometry onto its image.

\begin{remark} {\rm
If $f^{\prime}(u)=0$ for each $u\in(a,b)$, then, the curve $\gamma
(u)=(f(u),g(u))$ is a straight line in the $(x,z)$-plane and so, the surface
$S$ obtained by revolving $\gamma$ about the $z$-axis is Euclidean i.e.
locally isometric to the Euclidean plane $E^{2}.$
Furthermore, if $f^{\prime
\prime}(u)=0$ for each $u,$ we deduce that $f^{\prime}$ and $g^{\prime}$ are
constant functions since by assumption the curve $\gamma(u)=(f(u),g(u))$ is
parametrized by arc-length. Therefore $\gamma(u)$ is a line segment and thus
$S$ is a locally isometric to $E^{2}.$ }
\end{remark}

\section{Auxiliary Lemmas}

In this section we give some results that we will use for the proof of Theorem 1.

\begin{lemma}
 Assume that $f^{\prime}(u)\neq 0$ and
$f^{\prime \prime}(u)\neq 0$, for each $u\in(a,b)$. Let
$U$ be an open connected
subset of $S$ and  a map $\Phi:U\rightarrow
E^2$ having the properties $(C_1)$ and $(C_2)$. Then, 
 there are
variables $\phi$ and $\omega$ which are functions of $t,$ $u$ satisfying
\begin{equation*}
\left(\frac{\partial x}{\partial u},\frac{\partial y}{\partial u}\right)=
(\cos \phi
,\sin \phi), 
\ \ \ \
\left(\frac{\partial x}{\partial t},\frac{\partial y}{\partial t}\right)=(f(u)\cos
\omega,f(u)\sin \omega)
\end{equation*}
and
\[
f^{\prime \prime}(u)=\left(  \frac{\partial \omega}{\partial u}(t,u)\right)
^{2}f(u).  
\]
\end{lemma}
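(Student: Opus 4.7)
The plan is to translate the two conditions $(C_{1})$ and $(C_{2})$ into analytic statements about $\Phi=(x,y)$ and then exploit the equality of mixed partial derivatives of $x$ and $y$.

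\textbf{Step 1 — polar forms.} Condition $(C_{2})$ is equivalent to relations (iii) and (iv), which say that $(\partial x/\partial u,\partial y/\partial u)$ has unit length and $(\partial x/\partial t,\partial y/\partial t)$ has length $f(u)$. Because $U$ is connected and $\Phi$ is at least $C^{2}$, one can therefore select functions $\phi,\omega:U\rightarrow \mathbb{R}$ with
\[
\Bigl(\tfrac{\partial x}{\partial u},\tfrac{\partial y}{\partial u}\Bigr)=(\cos\phi,\sin\phi),\qquad \Bigl(\tfrac{\partial x}{\partial t},\tfrac{\partial y}{\partial t}\Bigr)=f(u)(\cos\omega,\sin\omega).
\]

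\textbf{Step 2 — using $(C_{1})$.} A meridional arc in $U$ is the curve $u\mapsto\Phi(t,u)$ for a fixed $t$; by Step~1 its velocity is the unit vector $(\cos\phi,\sin\phi)$. Condition $(C_{1})$ says this curve traces a straight line, so its unit tangent must be independent of the parameter $u$. Hence $\partial\phi/\partial u\equiv 0$ throughout $U$, and $\phi=\phi(t)$ depends on $t$ only. This is the key geometric input; I expect it to be the only non-routine step.

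\textbf{Step 3 — compatibility of mixed partials.} Equating $\partial^{2}x/\partial u\,\partial t$ computed both ways, and doing the analogous computation for $y$, produces the pair
\[
-\sin\phi\cdot\phi'(t)=f'(u)\cos\omega-f(u)\sin\omega\,\tfrac{\partial\omega}{\partial u},\qquad \cos\phi\cdot\phi'(t)=f'(u)\sin\omega+f(u)\cos\omega\,\tfrac{\partial\omega}{\partial u}.
\]
Solving this linear system for $f'(u)$ and for $f(u)\,\partial\omega/\partial u$ (multiply by $\sin\omega,\cos\omega$ appropriately and add) yields
\[
f'(u)=\sin(\omega-\phi)\,\phi'(t),\qquad f(u)\,\tfrac{\partial\omega}{\partial u}=\cos(\omega-\phi)\,\phi'(t).
\]

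\textbf{Step 4 — the ODE.} Differentiate the first identity of Step 3 with respect to $u$. Since $\phi$ depends only on $t$ by Step 2, the right-hand side differentiates cleanly and gives
\[
f''(u)=\cos(\omega-\phi)\,\tfrac{\partial\omega}{\partial u}\,\phi'(t).
\]
Substituting $\cos(\omega-\phi)\,\phi'(t)=f(u)\,\partial\omega/\partial u$ from the second identity of Step 3 produces $f''(u)=\bigl(\partial\omega/\partial u\bigr)^{2}f(u)$, which is the desired conclusion. Apart from the geometric observation in Step 2, everything else is bookkeeping with trigonometric identities and the compatibility condition; no hypothesis beyond $(C_{1})$, $(C_{2})$ and the smoothness of $\Phi$ is required.
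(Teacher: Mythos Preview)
Your proof is correct and follows essentially the same route as the paper: introduce the angular functions $\phi,\omega$ from conditions (iii)--(iv), use $(C_1)$ to get $\partial\phi/\partial u=0$, equate mixed partials to obtain the pair of identities relating $f'$ and $f\,\partial\omega/\partial u$ to $\phi'(t)$, differentiate the first in $u$, and combine. The only cosmetic difference is that the paper extracts the second identity by multiplying by $\cos\phi,\sin\phi$ (yielding $f\sin(\phi-\omega)\,\omega_u=-f'\cos(\phi-\omega)$) and then reaches the conclusion via a multiplication that ultimately requires dividing by $f'$, whereas your direct substitution of $\cos(\omega-\phi)\,\phi'(t)=f\,\omega_u$ into $f''=\cos(\omega-\phi)\,\omega_u\,\phi'(t)$ avoids that division altogether.
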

\begin{proof}
Proceeding as in the proof of Proposition 5 of \cite{[CP]} 
we have that there are
variables $\phi$ and $\omega$ which are functions of $t,$ $u$ such that
\begin{equation*}
(\frac{\partial x}{\partial u},\frac{\partial y}{\partial u})=(\cos \phi
,\sin \phi),  
\end{equation*}
\begin{equation*}
(\frac{\partial x}{\partial t},\frac{\partial y}{\partial t})=(f(u)\cos
\omega,f(u)\sin \omega).  
\end{equation*}

Since
\[
\frac{\partial^{2}}{\partial u\partial t}=\frac{\partial^{2}}{\partial
t\partial u},
\]
we have%
\begin{equation*}
-\sin \phi \cdot \frac{\partial \phi}{\partial t}=f^{\prime}\cdot \cos \omega
-f\cdot \sin \omega \cdot \frac{\partial \omega}{\partial u}. 
\end{equation*}

\begin{equation*}
\cos \phi \cdot \frac{\partial \phi}{\partial t}=f^{\prime}\cdot \sin \omega
+f\cdot \cos \omega \cdot \frac{\partial \omega}{\partial u}. 
\end{equation*}
Multiplying the first of the above equality by 
 $\cos \omega$, the second by $\sin \omega$ and
adding, we deduce that
\begin{eqnarray*}
\lefteqn{
(-\sin \phi \cdot \cos \omega+\cos \phi \cdot \sin \omega)\frac{\partial \phi}{\partial
t}=}\\
& & f^{\prime}\cdot \cos^{2}\omega-f\cdot \sin \omega \cdot \cos \omega \cdot
\frac{\partial \omega}{\partial u}+f^{\prime}\cdot \sin^{2}\omega+f\cdot
\cos \omega \cdot \sin \omega \cdot \frac{\partial \omega}{\partial u}%
\end{eqnarray*}
if and only if 
\begin{equation*}
\sin(\phi-\omega)\cdot \frac{\partial \phi}{\partial t}=f^{\prime}.
\end{equation*}
Similarly, multiplying the first equality by
 $\cos \phi$, the second by $\sin \phi$
and adding, we obtain%
\begin{eqnarray*}
\lefteqn{\sin u\cdot \cos \omega \cdot \cos \phi-}\\
& & \cos u\cdot \sin \omega \cdot \frac{\partial \omega}{\partial u}\cdot \cos \phi-\sin u\cdot \sin \omega \cdot \sin\phi+
\cos u\cdot \cos \omega \cdot \frac{\partial \omega}{\partial u}\cdot \sin \phi = 0
\end{eqnarray*}
which implies that%
\begin{equation*}
f\cdot \sin(\phi-\omega)\frac{\partial \omega}{\partial u}=-f^{\prime}\cdot
\cos(\phi-\omega). 
\end{equation*}

On the other hand, the condition $(C1)$ implies that the meridians are
 mapped  to straight lines, and so, we have 
\begin{equation*}
\frac{\partial \phi}{\partial u}=0.
\end{equation*}
Thus, 
differentiating 
\begin{equation*}
\sin(\phi-\omega)\cdot \frac{\partial \phi}{\partial t}=f^{\prime}
\end{equation*}
 with respect to $u$ and using the previous equality, we obtain%
\begin{equation*}
-\cos(\phi-\omega)\cdot \frac{\partial \omega}{\partial u}\cdot \frac
{\partial \phi}{\partial t}=f^{\prime \prime}.
\end{equation*}

Multiplying 
\begin{equation*}
\sin(\phi-\omega)\cdot \frac{\partial \phi}{\partial t}=f^{\prime}
\end{equation*}
by%
\[
\frac{\partial \omega}{\partial u}\cdot \frac{\partial \phi}{\partial t}%
\]
we have%
\[
f\cdot \sin(\phi-\omega)\cdot\left(\frac{\partial \omega}{\partial u}\right)^{2}\cdot
\frac{\partial \phi}{\partial t}=-f^{\prime}\cdot \cos(\phi-\omega)\cdot
\frac{\partial \omega}{\partial u}\cdot \frac{\partial \phi}{\partial t}.
\]
Hence, combining the above equalities,  we deduce%
\[
f\cdot f^{\prime}\cdot(\frac{\partial \omega}{\partial u})^{2}=f^{\prime}\cdot
f^{\prime \prime}%
\]
which implies that%
\begin{equation*}
f^{\prime}(u)(f^{\prime \prime}(u)-\left(  \frac{\partial \omega}{\partial
u}\right)  ^{2}f(u))=0.
\end{equation*}
Since we have $f^{\prime}(u)\neq 0$, we obtain the result.
\end{proof}

\begin{lemma}
Assume that $f^{\prime}(u)\neq 0$ and
$f^{\prime \prime}(u)\neq 0$, for each $u\in(a,b)$. Assume that
\begin{equation*}
2f^{\prime}a^{\prime}+fa^{\prime \prime}=0.
\end{equation*}
Then, we have $f^{2}=cu^{2}+du+k$ and
\[
a=a(u)=\arctan \left(  \frac{2c}{\sqrt{-\Delta}}\left(  u+\frac{d}{2c}\right)
\right),
\]
where $\Delta=d^{2}-4ck$.
\end{lemma}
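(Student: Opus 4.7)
The plan is to recognize the hypothesis $2f'a' + fa'' = 0$ as an exact derivative: multiplying by $f$ gives $(f^2 a')' = 2ff'a' + f^2 a'' = f(2f'a' + fa'') = 0$, so integration produces a first integral $f^2 a' = A$ for some constant $A$. Since this lemma will be invoked after Lemma 1 has been applied (with $a(u)$ playing the role of $\omega(t_0,u)$ along a fixed meridian), I would couple the first integral with the companion relation $f''(u) = (a'(u))^2 f(u)$ from that lemma. Eliminating $a'$ via $a' = A/f^2$ produces the autonomous second-order ODE $f'' = A^2/f^3$. Note $A\neq 0$, since otherwise $a'\equiv 0$ would give $f''\equiv 0$, contradicting the hypothesis $f''\neq 0$.

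From $f'' = A^2/f^3$ I would extract a second first integral by multiplying by $2f'$ and integrating: $(f')^2 = 2K - A^2/f^2$ for some constant $K$. Setting $g := f^2$ and using $g'' = 2(f')^2 + 2ff''$ together with $2ff'' = 2A^2/f^2$, the two $A^2/f^2$ terms cancel and yield $g'' = 4K$, a constant. Hence $g$ is a quadratic polynomial in $u$, proving $f^2 = cu^2 + du + k$ with $c = 2K$.

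To derive the closed form for $a$, I would complete the square to write $f^2 = c\bigl(u + \tfrac{d}{2c}\bigr)^2 - \tfrac{\Delta}{4c}$ with $\Delta = d^2 - 4ck$. A direct computation gives $f^3 f'' = -\Delta/4$, so comparison with $f'' = A^2/f^3$ pins down $A^2 = -\Delta/4$, which in particular forces $\Delta < 0$. Then $a' = A/f^2$ is integrated by the substitution $v = u + d/(2c)$, reducing to the standard $\int dv/(v^2+\mu^2)$ with $\mu = \sqrt{-\Delta}/(2c)$. Careful tracking of the prefactors $A/c$ and $1/\mu$ shows that the coefficient in front of $\arctan$ collapses exactly to $1$, delivering $a = \arctan\bigl(\tfrac{2c}{\sqrt{-\Delta}}(u + \tfrac{d}{2c})\bigr)$ after absorbing the integration constant.

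The main obstacle I foresee is purely bookkeeping: matching the two integration constants $A,K$ against the three parameters $c,d,k$ of the quadratic $f^2$, and verifying that the arctangent form (as opposed to a logarithmic or hyperbolic-arctangent form) is actually forced by the hypotheses $f'\neq 0$ and $f''\neq 0$, via the implication $\Delta<0$. No step is conceptually delicate; the lemma is essentially the reduction of two coupled first-order relations to a single second-order equation for $g = f^2$ with constant right-hand side, followed by an elementary integration.
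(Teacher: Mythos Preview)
Your proposal is correct and follows the same overall strategy as the paper: extract the first integral $f^2 a' = A$ from the hypothesis, couple it with the relation $f'' = (a')^2 f$ supplied by Lemma~1, conclude that $f^2$ is a quadratic polynomial in $u$, identify $A^2 = -\Delta/4$, and then integrate $a' = A/f^2$ to the arctangent via completing the square. The only difference is in the intermediate step establishing that $f^2$ is quadratic: the paper takes logarithms and differentiates to reach the third-order relation $f^{(3)}f + 3f''f' = 0$, which it then recognizes as $(ff')'' = 0$, while you go through the energy-type integral $(f')^2 = 2K - A^2/f^2$ and observe directly that $(f^2)'' = 4K$. Your route is a bit shorter and avoids the third-order detour, but the two arguments are otherwise equivalent.
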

\begin{proof} Putting $y=a^{\prime}$, we have the differential equation
\[
y^{\prime}+\frac{2f^{\prime}}{f}y = 0.
\]
Its  solution  is
\begin{equation*}
y=a^{\prime}=Ce^{-2\int \frac{f^{\prime}}{f}du}.
\end{equation*}
It follows that
$$(a^{\prime})^{2}   = C^2  e^{-4\int \frac{f^{\prime}}{f}du},$$
whence
 $$\frac{f^{\prime \prime}}{f}=C^2 e^{-4\int \frac{f^{\prime}}{f}du},$$
and so, we obtain
$$\ln f^{\prime \prime}-\ln f    =K-4\int \frac{f^{\prime}}{f}du.$$
Differentiating the above equality, we get
$$\frac{f^{(3)}}{f^{\prime \prime}}-\frac{f^{\prime}}{f}   =-4\frac{f^{\prime}%
}{f},$$
and therefore we deduce
\[
f^{(3)}f+3f^{\prime \prime}f^{\prime}=0.
\]

On the other hand, we have
\[
(ff^{\prime})^{\prime \prime}=((ff^{\prime})^{\prime})^{\prime}=(f^{\prime
}f^{\prime}+ff^{\prime \prime})^{\prime}=2f^{\prime \prime}f+f^{\prime \prime
}f+ff^{(3)}=f^{(3)}f+3f^{\prime \prime}f^{\prime},
\]
whence we get
\begin{equation*}
(ff^{\prime})^{\prime \prime}=0.
\end{equation*}
It follows that $(ff^{\prime})^{\prime}=c$, whence we have
$ff^{\prime}=c_{1}u+d_{1}$, and so, we get $(f^{2})^{\prime}=cu+d$. 
Thus, we obtain
\[
f^{2}=cu^{2}+du+k.
\]

Taking the first and the second derivative, we have
\[
f^{\prime}=\frac{1}{2}\frac{2cu+d}{\sqrt{cu^{2}+du+k}}\  \  \  \text{and}%
\  \  \ f^{\prime \prime}=\frac{4ck-d^{2}}{4(cu^{2}+du+k)^{3/2}}.
\]
Thus
\[
\frac{f^{\prime \prime}}{f}=\frac{4ck-d^{2}}{4(cu^{2}+du+k)^{2}}%
\  \  \  \text{and}\  \  \  \frac{f^{\prime}}{f}=\frac{2cu+d}{2(cu^{2}+du+k)}.
\]
Since
\[
\frac{f^{\prime \prime}}{f}=(a^{\prime})^{2}= C^2 
e^{-4\int \frac{f^{\prime}}%
{f}du},
\]
we have
\[
\frac{f^{\prime \prime}}{f}=\frac{C^2}{f^{4}}.
\]
Thus, we obtain
\[
\frac{4ck-d^{2}}{4(cu^{2}+du+k)^{2}}=\frac{C^2}{(cu^{2}+du+k)^{2}},
\]
and therefore
\[
C^2 =\frac{4ck-d^{2}}{4}.
\]

Let $\Delta=d^{2}-4ck$ be the discriminant of $cu^{2}+du+k$. Thus, we
get 
\[
C=\frac{\sqrt{-\Delta}}{2}.
\]
Furthermore, we have
\[
a^{\prime}=Ce^{-2\int \frac{f^{\prime}}{f}du}=\frac{\sqrt{-\Delta}/2}{f^{2}%
}=\frac{\sqrt{-\Delta}/2}{cu^{2}+du+k}%
\]
and thus
\[
a=\int a^{\prime}du=\int \frac{\sqrt{-\Delta}/2}{cu^{2}+du+k}du=\int \frac
{\sqrt{-\Delta}/2}{c(u+\frac{d}{2c})^{2}+(\frac{\sqrt{-\Delta}}{2c})^{2}}du.
\]
Hence, we obtain 
\begin{equation*}
a(u)=\arctan \left(  \frac{2c}{\sqrt{-\Delta}}\left(  u+\frac{d}{2c}\right)
\right).   
\end{equation*} \end{proof}

\section{Proof of Theorem 1}
Suppose that  there exists a map $\Phi:U\rightarrow
E^{2}$ having the properties $(C1)$ and $(C2)$. 
By Lemma 1,  there are
variables $\phi$ and $\omega$ which are functions of $t,$ $u$ satisfying
\begin{equation}
\left(\frac{\partial x}{\partial u},\frac{\partial y}{\partial u}\right)=(\cos \phi
,\sin \phi), \tag{1}\label{1}%
\end{equation}
\begin{equation}
\left(\frac{\partial x}{\partial t},\frac{\partial y}{\partial t}\right)=(f(u)\cos
\omega,f(u)\sin \omega). \tag{2}\label{2}%
\end{equation}
and
\[
f^{\prime \prime}(u)=\left(  \frac{\partial \omega}{\partial u}(t,u)\right)
^{2}f(u). \tag{3}\label{3}%
\]
By $(C1)$, the meridians are mapped  to straight lines, and so, we have 
\begin{equation*}
\frac{\partial \phi}{\partial u}=0. 
\end{equation*}
Thus (\ref{1}) yields
\begin{eqnarray*}
\lefteqn{\frac{\partial}{\partial u}\left(  \frac{\partial x}{\partial u}%
,\frac{\partial y}{\partial u}\right)  =}\\
& &  \left(  \frac{\partial^{2}x}{\partial
u^{2}},\frac{\partial^{2}y}{\partial u^{2}}\right)= \frac{\partial}{\partial
u}(\cos \phi,\sin \phi)=
\left(-\sin \phi \frac{\partial \phi}{\partial u},\cos \phi
\frac{\partial \phi}{\partial u}\right)=(0,0).
\end{eqnarray*}
It follows
\begin{equation}
x(t,u)=ug_{1}(t)+g_{2}(t),\  \  \  \ y(t,u)=uh_{1}(t)+h_{2}(t). \tag{4}\label{4}%
\end{equation}
Therefore, the function $(\partial \omega/\partial u)$ is a function depending
only on the variable $u$, and hence there exist functions $a(u)$ and $b(t)$
such that
\begin{equation}
\omega(t,u)=a(u)+b(t). \tag{5}\label{5}%
\end{equation}
Combining (\ref{2}) and (\ref{5}), we deduce
\begin{equation}
\left(  \frac{\partial x}{\partial t},\frac{\partial y}{\partial t}\right)
=(f\cos(a(u)+b(t)),f\sin(a(u)+b(t))), \tag{6}\label{6}%
\end{equation}
and using that
\[
\frac{\partial^{2}x}{\partial u\partial t}=\frac{\partial^{2}x}{\partial
t\partial u},
\]
(\ref{4}) and (\ref{6}) implies that
\[
\frac{\partial}{\partial u}f\cos(a(u)+b(t))=\frac{\partial}{\partial t}%
g_{1}(t).
\]
Therefore, for each $t$ and $u$, we deduce
\begin{equation}
f^{\prime}(u)\cos((a(u)+b(t))-f(u)\sin(a(u)+b(t))a^{\prime}(u)=g_{1}^{\prime
}(t). \tag{7}\label{7}%
\end{equation}
Similarly, from
\[
\frac{\partial^{2}y}{\partial u\partial t}=\frac{\partial^{2}y}{\partial
t\partial u}%
\]
we get
\begin{equation}
f^{\prime}(u)\sin((a(u)+b(t))+f(u)\cos(a(u)+b(t))a^{\prime}(u)=h_{1}^{\prime
}(t), \tag{8}\label{8}%
\end{equation}
for each $t$ and $u$.

By taking the derivative of (\ref{7}) with respect to $u$ we have
\[
f^{\prime \prime}\cos \omega-2f^{\prime}a^{\prime}\sin \omega-f(a^{\prime}%
)^{2}\cos \omega-fa^{\prime \prime}\sin \omega=0
\]
and so, we get
\begin{equation*}
\sin \omega(2f^{\prime}a^{\prime}+fa^{\prime \prime})-(f^{\prime \prime
}-f(a^{\prime})^{2})\cos \omega=0  
\end{equation*}
Assuming that $\sin \omega \neq0$,   we obtain 
\begin{equation}
2f^{\prime}a^{\prime}+fa^{\prime \prime}=0. \tag{9}\label{9}%
\end{equation}
If $\sin \omega=0,$ then $\cos \omega \neq0$.
Thus, by taking the derivative of (\ref{8}) we can derive the same
differential equation (\ref{9}), restricting if necessary the domain where
the functions $f$ and $a$ are defined.
Lemma 2 implies that
$$f^{2}=cu^{2}+du+k$$
and
\[
a=a(u)=\arctan \left(\frac{2c}{\sqrt{-\Delta}}\left(u+\frac{d}{2c}\right)
\right), \tag{10}\label{10}
\]
where $\Delta=d^{2}-4ck$.

 Using (\ref{9}) and (\ref{10}) we get
\[
\frac{f^{\prime}}{f}=-\frac{a^{\prime \prime}}{2a^{\prime}}=a^{\prime}\tan a,
\]
which is equivalent to
 \begin{equation}
 f^{\prime}\cos a-fa^{\prime}\sin a=0.
 \tag{11}\label{11}
 \end{equation}

If $f \cos a =0$, then the above equality implies that 
$fa^{\prime}\sin a=0$. Since $f(u) a^{\prime}(u) \neq 0$, for every $u$,
we have $\sin a=0$ which is a contradiction. Thus, dividing the above
equality by  $f \cos a$, we obtain  $\frac{f^{\prime}}{f}\tan
a+a^{\prime}=0$. Substituting $f^{\prime}/f$ by $a^{\prime}\tan a$ we deduce
 $a^{\prime}(\tan a)^2+a^{\prime}=0$, whence $(\tan a)^2 = -1$
 which is a contradiction. Hence we have 
$$f^{\prime}\sin a+fa^{\prime}\cos a\neq 0.$$

On the other hand, by taking the derivative of
$f^{\prime}\sin a+fa^{\prime}\cos a$, we have
\[
(f^{\prime}\sin a+fa^{\prime}\cos a)^{\prime}=f^{\prime \prime}\sin
a+f^{\prime}a^{\prime}\cos a+f^{\prime}a^{\prime}\cos a+fa^{\prime \prime}\cos
a-f(a^{\prime})^{2}\sin a.
\]
In order to prove that this expression is zero, it suffices to show that
\[
\frac{f^{\prime \prime}}{f}\tan a+2\frac{f^{\prime}}{f}a^{\prime}+a^{\prime \prime
}-(a^{\prime})^{2}\tan a=0
\]
and one can verify, by a simple replacement, that this relation holds. Furthermore, we have
\[
a^{\prime}(0)=\frac{\sqrt{-\Delta}}{2k},\text{ }f^{\prime}(0)=\frac{d}%
{2\sqrt{k}},\text{ }f(0)=\sqrt{k},\text{ }\tan a(0)=\frac{d}{\sqrt{-\Delta}}.
\]
Then, we obtain
\begin{equation}
f^{\prime}\sin a+fa^{\prime}\cos a=\sqrt{c}. \tag{12}\label{12}
\end{equation}

 By expanding relation (\ref{7}), we obtain
\begin{eqnarray*}
\lefteqn{f^{\prime}(\cos a(u)\cos b(t)-\sin a(u)\sin b(t))-}\\
& & fa^{\prime}(\sin a(u)\cos
b(t)+\sin b(t)\cos a(u))=g_1^{\prime}(t),
\end{eqnarray*}
and from (\ref{11}), (\ref{12}) the relation $g_{1}^{\prime}(t)=\sqrt{c}\sin b(t)$ follows.

Similarly, from (\ref{9}) we have:
\[
f^{\prime}(\sin a\cos b+\sin b\cos a)+fa^{\prime}(\cos a\cos b-\sin a\sin
b)=h_{1}^{\prime}(t).
\]
whence
\[
\cos b(f^{\prime}\sin a+fa^{\prime}\cos a)+\sin b(f^{\prime}\cos a-fa^{\prime
}\sin a)=h_{1}^{\prime}(t)
\]
and so, we obtain  $h_{1}^{\prime}(t)=\sqrt{c}\, \cos b(t)$.
 Therefore, we get
$$g_{1}^{\prime}(t)=\sqrt{c}\sin b(t) \ \ \ \text{and}\ \ \  h_{1}^{\prime
}(t)=\sqrt{c}\, \cos b(t).$$

We will proceed now with the computation of the projection $\Phi.\medskip$
By hypothesis, we have, that $(\partial \phi/\partial u)=0.$ 
 Hence $\phi$ is a
function only of $t.$ From (\ref{1}), (\ref{2}) and (\ref{4}) we have that
\begin{equation}
\left(  \frac{\partial x}{\partial u},\frac{\partial y}{\partial u}\right)
=(\cos \phi(t),\sin \phi(t))=(g_{1},h_{1})\text{ }\tag{13}\label{13}%
\end{equation}
and%
\begin{equation}
\left(  \frac{\partial x}{\partial t},\frac{\partial y}{\partial t}\right)
=(f(u)\cos(a(u)+b(t)),f(u)\sin(a(u)+b(t)))=(ug_{1}^{\prime}+g_{2}^{\prime
},uh_{1}^{\prime}+h_{2}^{\prime}).\tag{14}\label{14}%
\end{equation}
Consequently, (\ref{13}) and (\ref{14}) we get respectively that
\[
(g_{1})^{2}+(h_{1})^{2}=1
\]
and
\[
u^{2}((g_{1}^{\prime})^{2}+(h_{1}^{\prime})^{2})+2u(g_{1}^{\prime}%
g_{2}^{\prime}+h_{1}^{\prime}h_{2}^{\prime})+(g_{2}^{\prime})^{2}%
+(h_{2}^{\prime})^{2}=cu^{2}+du+k.
\]
Therefore, we have:
\begin{align*}
(g_{2}^{\prime})^{2}+(h_{2}^{\prime})^{2} &  =k\\
2(g_{1}^{\prime}g_{2}^{\prime}+h_{1}^{\prime}h_{2}^{\prime}) &  =d.
\end{align*}
From the first of the previous relations we deduce that there exists a
function $r(t)$ such that
\begin{equation}
(g_{2}^{\prime},h_{2}^{\prime})=(\sqrt{k}\cos r(t),\sqrt{k}\sin r(t))\tag{15}%
\label{15}%
\end{equation}
while from the second, in combination with (\ref{13}) and (\ref{12}), we deduce
that $2\sqrt{ck}\sin(b+r)=d$ and thus
\[
\sin(b+r)=\frac{d}{2\sqrt{ck}}.
\]
Therefore, there exists real number $\theta_{0}$ such that
$$r(t)=\theta_{0}-b(t). $$
Furthermore, from (\ref{13}) we have that
\[
(g_{1}^{\prime},h_{1}^{\prime})=(-\phi^{\prime}\sin \phi,\phi^{\prime}\cos
\phi)=(\sqrt{c}\, \sin b,\text{ }\sqrt{c}\, \cos b),
\]
and so, we have the following  two cases:%

a) $\phi^{\prime}=\sqrt{c}$ and  $\phi(t)=-b(t)$. 
Thus, we have 
$$b^{\prime}(t)=-\phi^{\prime}=-\sqrt{c},$$
whence  
$$b(t)=-\sqrt{c}t+c_{0}. $$
Then, we get
\begin{equation}
(g_{1},h_{1})=(\cos(-b(t)),\sin(-b(t)))=(\cos b(t),-\sin b(t)).\tag{16}%
\label{16}%
\end{equation}
Thus, combining (\ref{4}), (\ref{15}) and (\ref{16}) we
deduce
\begin{align*}
x(t,u) &  =u\cos(b(t))+\int \sqrt{k}\cos(\theta_{0}-b(t))dt\\
y(t,u) &  =-u\sin(b(t))+\int \sqrt{k}\sin(\theta_{0}-b(t))dt.
\end{align*}

b) $\phi^{\prime}=-\sqrt{c}$ and $\phi(t)=\pi-b(t)$. Proceeding
as above, we deduce the result.
 
Furthermore, substituting $b(t)$
in the integrals above we may calculate them and thus we may find explicit
formulas for the map $\Phi.$

Conversely, by substituting the above expressions of $x(t,u)$ and $y(t,u)$ into (iii) and (iv), and supposing that $f^{2}=cu^{2}+du+k$, we see that condition $(C1)$ is easily verified. Also, condition $(C2)$  is  satisfied, since  
$\frac{\partial x}{\partial u}=\cos \phi$
implies that $$\phi=\arccos(\frac{\partial x}{\partial u})$$ and so, by taking the derivative with respect to $u$, we obtain that 
$\frac{\partial \phi}{\partial u}=0.$
Hence, Theorem \ref{main theorem} is proven.

\section{Proof of Corollary 1}
\
(1) The Gaussian curvature of each point of $S$ is given by the formula
\[
K=-\frac{f^{\prime \prime}}{f}%
\]
(see Formula (9), p. 162, in the Example 4 of \cite{[DoCarmo]}). On the other
hand, in the proof of Lemma 2
 we have shown that $f^{\prime \prime
}/f>0.$ Therefore, $K<0$ at every point of $S$ and thus statement (1) is proven.

(2) The  surfaces of revolution of constant negative curvature are well known.
A description of them can be found for example in (\cite{[Gray]}, Theorem
15.22, p. 477). Obviously these surfaces of revolution $R$ does not have the
form of the surface $S$ given in Theorem \ref{main theorem}. Therefore, for
any point $p\in R$ and for any neighborhood $U\subset R$ of $p$ there does not
exist a map $\Phi:U\rightarrow E^{2}$ satisfying the conditions $(C1)$ and
$(C2).$ On the other hand, if $S_{0}$ is a Riemannian surface of constant
negative curvature $k<0,$ it is well known that $S_{0}$ is locally isometric
to surface of revolution $R$ of constant curvature $k.$   Therefore our
statement follows.

\bigskip

Charalampos Charitos

Department of Natural Resources Management \& Agricultural Engineering

Agricultural University Athens

Iera Odos 55, 11855 Athens, Greece

email: bakis@aua.gr

\bigskip

Petroula Dospra

Department of Electrical and Computer Engineering

University of Western Makedonia

50100 Kozani, Greece

email: petroula.dospra@gmail.com

\end{document}